\numberwithin{equation}{section}
\newcommand{\bea}{\begin{eqnarray}}
\newcommand{\eea}{\end{eqnarray}}
\newcommand{\be}{\begin{eqnarray*}}
\newcommand{\ee}{\end{eqnarray*}}
\newtheorem{theorem}{Theorem}[section]
\newtheorem{definition}{Definition}[section]
\newtheorem{example}{Example}[section]
\newtheorem{remark}{Remark}[section]
\begin{document}
\title[Factoring Pseudoidentity Matrix Pairs]{Factoring Pseudoidentity Matrix Pairs}
\author{Florian M. Sebert and Yi Ming Zou}
\thanks{F. Sebert is with d-fine consulting in Frankfurt, email: fmsebert@gmail.com. Y. M. Zou is with the 
Department of Mathematical Sciences, University of Wisconsin, 
Milwaukee, WI 53201, USA, email: ymzou@uwm.edu.}

\maketitle

\begin{abstract}
The problem of factorization and parametrization of compactly supported biorthogonal wavelets was reduced to that of pseudoidentity matrix pairs by Resnikoff, Tian, and Wells in their 2001 paper. Based on a conjecture on the pseudoidentity matrix pairs of rank 2 stated in the same paper, they proved a theorem which gives a complete factorization result for rank 2 compactly supported biorthogonal wavelets. In this paper, we first provide examples to show that the conjecture is not true, then we prove a factorization theorem for pseudoidentity matrix pairs of rank $m\ge 2$. As a consequence, our result shows that a slightly modified version of the factorization theorem in the rank 2 case given by Resnikoff, Tian, and Wells holds. We also provide a concrete constructive method for the rank 2 case which is determined by applying the Euclidean algorithm to two polynomials.
\par
\medskip 
\textbf{Key words}: Biorthogonal Wavelets, Pseudoidentity Matrix Pairs, Factorization, Vanishing Moment
\par
\medskip
\textbf{AMS subject classifications}: 15A23, 42C40  
\end{abstract}
\section{Introduction}
It is well-known that compactly supported orthogonal wavelets cannot be symmetric except for Haar wavelets \cite{Da, RTW, SN}. In signal processing, the symmetry property corresponds to the linear phase condition, and since biorthogonal wavelets permit symmetry, they are preferred over the orthogonal wavelets in applications such as digital filter banks. Because of the flexibility of the biorthogonal condition \cite{BS, CZ, CDF, Da, DS, HV, KHF, SN}, one can add other conditions to the definition of a biorthogonal wavelet according to the needs. To guarantee the existence of scaling functions and wavelet functions, a zeroth-order vanishing moment condition was imposed on the biorthogonal wavelet matrix in addition to the usual quadratic condition in \cite{RTW}. This extra requirement must be taken into account when analyzing the structure of these wavelets. In particular, the factorization results from the earlier investigations \cite{BS, CZ, DS, KHF, Po, RW} do not apply to these wavelets. It was proved in \cite{RTW} that any biorthogonal wavelet matrix pair can be decomposed into four components: an orthogonal component, a pseudoidentity matrix pair (defined in \cite{RTW}), an invertible matrix, and a constant matrix. Since the factorization and parametrization of the orthogonal wavelet matrices is well understood \cite{RTW, RW, SN}, the factorization of a biorthogonal wavelet matrix pair reduces to that of a pseudoidentity matrix pair. To obtain a complete factorization and therefore a parametrization of the biorthogonal wavelets, a conjecture was proposed for the pseudoidentity matrix pairs of rank $2$ in \cite{RTW}, and based on this conjecture, a complete factorization theorem was proved for the rank 2 case therein.
\par
In this paper, we prove a factorization theorem for a pseudoidentity matrix pair of rank $m\ge 2$, which in particular implies that the factorization theorem for the rank $2$ case stated in \cite{RTW} is true (with a slightly modified statement) in spite of the invalidness of the conjecture proposed there. We first provide examples to show that the conjecture is not true and then proceed to prove a factorization theorem. Our proof is elementary and constructive: it applies to any rank and provides an algorithm for the decomposition. 
\par
We organize our presentation as follows. In section 2, we first recall the definition and the basic properties of biorthogonal wavelets as given in \cite{RTW}, then we discuss some counterexamples to the conjecture stated in \cite{RTW}. In Section 3, we prove our factorization result on pseudoidentity matrix pairs and thus obtain a complete factorization for the biorthogonal wavelet matrix pairs. In section 4, we consider the rank 2 case in some detail and show that in this case, the factorization of a pseudoidentity matrix pair is basically determined by the Euclidean algorithm for finding the greatest common divisor (GCD) of two polynomials. We conclude the paper in section 5 with some discussion. 
\par 
\medskip
\section{Biorthogonal wavelets} 
\par
Let $\mathbb{C}$ be the set of complex numbers, let $\mathbb{F}\subseteq\mathbb{C}$ be a subfield invariant under the complex conjugation, and let $R=\mathbb{F}[z,z^{-1}]$ be the ring of Laurent polynomials over $\mathbb{F}$. The reason for us to work with $\mathbb{F}$ instead of just $\mathbb{C}$ is that it will allow more flexibility in the applications, since the discussions here apply to a whole range of fields including the rational numbers $\mathbb{Q}$, the real numbers $\mathbb{R}$, the complex numbers $\mathbb{C}$, as well as other algebraic extensions of $\mathbb{Q}$. For instance, in some applications of digital filters, it is desirable to work with rational numbers, since it is easier to implement the filters. We denote by $gl(m,R)$ the set of all $m\times m$ matrices over $R$ and denote by $GL(m,R)$ the set of all invertible $m\times m$ matrices over $R$. We also denote by $gl(m,\mathbb{F})$ and $GL(m,\mathbb{F})$ the set of $m\times m$ matrices and the set of invertible $m\times m$ matrices over $\mathbb{F}$, respectively. For a matrix $A(z)\in gl(m,R)$, we can write
\bea\label{Laurent1}
A(z)=\sum_{k\in\mathbb{Z}}A_kz^{-k}=\sum_{k=k_0}^{k_1}A_kz^{-k},
\eea  
where $A_k\in gl(m,\mathbb{F}), k_0\le k\le k_1$, and $k_0$ and $k_1$ are the smallest and the largest indices such that $A_k\ne 0$, respectively. We call the expression of $A(z)$ in (\ref{Laurent1}) the Laurent series of $A(z)$. The number $g=k_1-k_0+1$ is called the genus of $A(z)$. If $A(z)$ has genus $g$, then we can identify $A(z)$ with the block matrix
\bea\label{blk}
A:=(A_{k_0}, \ldots, A_0, \ldots, A_{k_1} ),
\eea
where
\bea
A_k=(a_{i,km+j}), \quad 0\le i\le m-1, 0\le j\le m-1, k_0\le k\le k_1,
\eea
and say that $A(z)$ has size $m\times mg$. We define the adjoint $\tilde{A}(z)$ of $A(z)$ by
\bea
\tilde{A}(z)=A^{\ast}(z^{-1})=\sum_{k=k_0}^{k_1}A^{\ast}_kz^{k}=\sum_{k=-k_1}^{-k_0}A^{\ast}_{-k}z^{-k},
\eea 
where $A_k^{\ast}$ denotes the conjugate transpose of the complex matrix $A_k$.
\par
We are now ready to recall the definition of a biorthogonal wavelet matrix pair and the definition of a pseudoidentity matrix pair from \cite{RTW}.
\begin{definition}
A pair of $m\times mg$ matrices $(L =(l_{i,j}), R =(r_{i,j}))$ (as in (\ref{blk})) is said to be a biorthogonal wavelet matrix pair of rank $m$ and genus $g$ if the following conditions are satisfied:
\bea\label{qua}
L(z)\tilde{R}(z)=mI_m,
\eea 
and 
\bea\label{lin}
\sum_{j}l_{i,j}=\sum_{j}r_{i,j}=\left\{\begin{array}{ccl}
                                          m, & \mbox{if} & i=0,\\
                                          0, & \mbox{if} & 1\le i\le m-1,   
                                      \end{array}\right.
\eea
where $I_m$ is the $m\times m$ identity matrix.
\end{definition}
\par
In the literature, the matrix $L(z)$ is called the \textit{analysis matrix} and the matrix $R(z)$ is called the \textit{synthesis matrix} of the biorthogonal wavelet pair. Condition (\ref{qua}) is called the \textit{quadratic} or \textit{perfect reconstruction} condition and condition (\ref{lin}) is called the \textit{linear condition}. Note that (\ref{lin}), which is also referred to as the \textit{zeroth-order vanishing moment condition}, is a necessary condition for the existence of scaling functions and wavelet functions. This is one of the main differences between wavelets and perfect reconstruction filter banks.  
\begin{definition}
A matrix pair $(C(z), D(z))$ is called a pseudoidentity matrix pair if 
\bea\label{pimp}
\begin{array}{ccc}
\displaystyle C(z)=\sum_{k=0}^{k_c}C_kz^{-k}, \quad & \displaystyle D(z)=\sum_{k=k_d}^{0}D_kz^{-k}, & \; (k_d\le 0\le k_c)\\
{} \quad &{}\\
C(z)\tilde{D}(z)=I_m, \quad &C(1)=D(1)=I_m. &{}
\end{array}
\eea
\end{definition}
\par\medskip
We remark that it follows from the definition that 
\bea\label{det}
\det(C(z))=\det(D(z)) =1. 
\eea
For some other properties of the pseudo identity matrix pairs, please see \cite{RTW}.
\par\medskip
Recall that a primitive paraunitary matrix is an $m\times 2m$ matrix of the form
\bea
V(z)=I_m-vv^{\ast}+vv^{\ast}z^{-1},
\eea
where $v\in\mathbb{F}^m$ is a unit column vector. Note that a primitive paraunitary matrix satisfies $V(z)\tilde{V}(z) = I_m$
\par\medskip
The following theorem, due to Resnikoff, Tian, and Wells, reduces the factorization of a biorthogonal wavelet matrix pair to that of a pseudoidentity matrix pair.
\begin{theorem} (see [11, Thm. 3.4]) A pair of $m\times mg$ matrices $(L,R)$ is a biorthogonal wavelet pair of rank $m$ if and only if there exist primitive paraunitary matrices $V_1,\ldots, V_d$, $d\ge 0$, such that 
\bea\begin{array}{c}
L(z)=z^{-k_0}V_1(z)\cdots V_d(z)C(z)\left(\begin{array}{cc} 1 & 0\\ 0 & G
                                    \end{array}\right)\mathbf{H},\\
                                    {}\\
R(z)=z^{-k_0}V_1(z)\cdots V_d(z)D(z)\left(\begin{array}{cc} 1 & 0\\ 0 & (G^{-1})^{\ast}
                                    \end{array}\right)\mathbf{H},                                    

\end{array}
\eea
where $k_0\in\mathbb{Z}$, $d=b-mk_0$, $b$ is the exponent of $\det L(z)$, $G\in GL(m-1,\mathbb{F})$, $\mathbf{H}$ is the canonical Haar matrix of rank $m$, and $(C(z), D(z))$ is a pseudoidentity matrix pair such that the genus of $C(z)$ is $\le g$.
\end{theorem}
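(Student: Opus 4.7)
My plan is to handle the two directions separately: sufficiency by direct computation, and necessity by a four-stage normalization that strips, in order, the right-hand Haar and constant $\operatorname{diag}(1,G)$ factors, then the primitive paraunitary factors from the left, leaving a pseudoidentity pair and a scalar $z^{-k_0}$ to be accounted for.

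For sufficiency, assume the stated decomposition and compute $L(z)\tilde R(z)$ from the inside out. The relation $C(z)\tilde D(z)=I_m$ kills the central factor; the constant block contributes $\operatorname{diag}(1,G)\mathbf{H}\mathbf{H}^{*}\operatorname{diag}(1,G^{-1})=m\,I_m$, using $\mathbf{H}\mathbf{H}^{*}=mI_m$ and the fact that the adjoint of $\operatorname{diag}(1,(G^{-1})^{*})$ is $\operatorname{diag}(1,G^{-1})$; the paraunitary identities $V_i(z)\tilde V_i(z)=I_m$ then telescope; and $z^{-k_0}\cdot z^{k_0}=1$ finishes the quadratic condition. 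For the linear condition, evaluate at $z=1$: since $V_i(1)=I_m$ and $C(1)=D(1)=I_m$, we get $L(1)=\operatorname{diag}(1,G)\mathbf{H}$ and $R(1)=\operatorname{diag}(1,(G^{-1})^{*})\mathbf{H}$, from which the prescribed row sums follow from the known row-sum structure of the canonical Haar matrix.

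For necessity, begin with a biorthogonal wavelet pair $(L,R)$. In Stage~1, use the linear condition to show that the row-sum structure of $L(1)$ forces a factorization $L(1)=\operatorname{diag}(1,G)\mathbf{H}$ for some $G\in GL(m-1,\mathbb{F})$, and that the quadratic condition at $z=1$ then forces the matching factor $(G^{-1})^{*}$ on the $R$-side. Right-multiplying by $\mathbf{H}^{-1}\operatorname{diag}(1,G^{-1})$ (and by its dual on the $R$-side) produces a biorthogonal pair whose values at $z=1$ are the identity. In Stage~2, I extract primitive paraunitary factors from the left by induction on the genus: whenever the current $L$ has genus exceeding that of a pseudoidentity matrix, the quadratic relation together with the one-sided support required of $C(z)$ and $D(z)$ forces the leading coefficient of $L(z)$ to be rank-deficient; any unit vector $v$ in its null space yields $V(z)=I_m-vv^{*}+vv^{*}z^{-1}$ such that $\tilde V(z)L(z)$ has strictly smaller genus. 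The \emph{same} $V$ must be applied to $R$ from the left simultaneously in order to preserve the biorthogonal identity. In Stages~3 and~4 I check that the terminal pair is a pseudoidentity pair (one-sided support, values $I_m$ at $z=1$, and $C\tilde D=I_m$ all inherited from the preceding reductions), and I reconcile determinant exponents via $\det V_i(z)=z^{-1}$, $\det C(z)=1$, and $\det\mathbf{H}$ constant to obtain $d=b-mk_0$.

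The main obstacle is Stage~2: the same $v$ must work for both $L$ and $R$ so that biorthogonality is preserved under the extraction, and the extraction has to terminate exactly at the one-sided support configuration characteristic of a pseudoidentity pair. A subsidiary but nontrivial point in Stage~1 is showing that the linear condition genuinely forces the Haar-diagonal structure on $L(1)$ and $R(1)$, as opposed to merely fixing their row sums.
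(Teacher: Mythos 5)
First, a point of reference: the paper does not prove this statement at all --- it is quoted verbatim from Resnikoff, Tian, and Wells as [11, Thm.~3.4] and used as a black box --- so there is no in-paper argument to compare yours against; I can only judge the sketch on its merits and against the source. Your sufficiency direction is correct and essentially complete: $\mathbf{H}\mathbf{H}^{*}=mI_m$, $V_i(z)\tilde V_i(z)=I_m$, $C(z)\tilde D(z)=I_m$ give the quadratic condition, and $V_i(1)=C(1)=D(1)=I_m$ together with $\mathbf{H}\mathbf{1}=me_1$ give the linear one. The Stage~1 issue you flag is real but closable: the linear condition for $L$ only gives $L(1)=A\mathbf{H}$ with $Ae_1=e_1$, i.e.\ $A=\left(\begin{smallmatrix}1&w^{*}\\ 0&G\end{smallmatrix}\right)$ with a possibly nonzero $w$; you must then use the quadratic condition at $z=1$ to get $R(1)=(A^{-1})^{*}\mathbf{H}$ and invoke the linear condition for $R$, which forces $(G^{-1})^{*}w=0$ and hence $w=0$. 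Both linear conditions plus the quadratic one are needed; neither alone suffices.

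The genuine gap is Stage~2, which you correctly identify as the main obstacle but do not resolve, and which as stated would fail. Writing $\tilde V(z)L(z)=(I_m-vv^{*}+vv^{*}z)L(z)$, the coefficient of $z^{-k_1}$ is $(I_m-vv^{*})L_{k_1}$; choosing $v$ in the \emph{null space} of $L_{k_1}$ does nothing to kill this term --- you would need the \emph{column space} of $L_{k_1}$ to lie in the span of the single vector $v$, which requires $\operatorname{rank}L_{k_1}\le 1$ and is not implied by the rank deficiency you derive from $L_{k_1}R^{*}_{k_0'}=0$. Simultaneously a new coefficient $vv^{*}L_{k_0}$ appears at the bottom end unless $v^{*}L_{k_0}=0$, so the genus need not decrease and can increase. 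Even in the orthogonal case the correct induction runs on the McMillan degree (the exponent of $\det L(z)$), not on the genus --- which is exactly why the theorem records $d=b-mk_0$ --- and in the biorthogonal case the extraction cannot be pushed to a constant: it must terminate precisely when the two residuals acquire the \emph{opposite} one-sided supports of a pseudoidentity pair ($C$ supported on $z^{0},\dots,z^{-k_c}$, $D$ on $z^{0},\dots,z^{-k_d}$ with $k_d\le 0$), and your sketch supplies no criterion for when this configuration is reached nor an argument that the process terminates. This residue is the entire reason the pseudoidentity machinery exists, so the step cannot be waved through; to make the necessity direction rigorous you would need to reproduce (or replace) the degree-reduction argument of [11], which is the substantive content of their Theorem 3.4.
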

\par\medskip
We consider an example of pseudoidentity matrix pairs.
\begin{example}
For any sequence of nonzero numbers 
\bea
a_1, a_2,\ldots, a_k\in \mathbb{F}, 
\eea
let $0<m_1<\cdots<m_k$ be $k$ positive integers, let
\bea
a_0 =\sum_{i=1}^ka_i,\qquad u(z)=\sum_{i=1}^ka_iz^{-m_i},
\eea
let
\bea
C(z)=\left(\begin{array}{cc} 1-a_0 & a_0 \\
                             -a_0  & 1+a_0 \end{array}\right) 
                             + \sum_{i=1}^{k}\left(\begin{array}{cc} a_i & -a_i \\
                             a_i  & -a_i \end{array}\right)z^{-m_i},
\eea
and let 
\bea
D(z)=\left(\begin{array}{cc} 1+\bar{a}_0 & \bar{a}_0 \\
                             -\bar{a}_0  & 1-\bar{a}_0 \end{array}\right) 
                             + \sum_{i=1}^{k}\left(\begin{array}{cc} -\bar{a}_i & -\bar{a}_i \\
                             \bar{a}_i  & \bar{a}_i \end{array}\right)z^{m_i},
\eea
where $\bar{a}_i$ is the complex conjugate of $a_i$. Then 
\bea\begin{array}{c}
 C(1)=D(1)=I_2, \quad C(z)\tilde{D}(z) = I_2,\\
 {}\\
  \det(C(z))=\det(D(z))=1.
  \end{array}
\eea
The first equality follows from the definition, and the second and the third equalities can be easily seen since
\bea
C(z)=\left(\begin{array}{cc} 1-a_0+u(z) & a_0-u(z) \\
                             -a_0+u(z)  & 1+a_0-u(z) \end{array}\right)
\eea
has inverse
\bea
(C(z))^{-1}=\left(\begin{array}{cc} 1+a_0-u(z) & -a_0+u(z) \\
                             a_0-u(z)  & 1-a_0+u(z) \end{array}\right),
\eea
and since $\tilde{D}(z)=(C(z))^{-1}$.
\end{example}
\par\medskip
For instance, if we take $k=2$, $a_1=a_2 =1$, and $m_1=1, m_2=2$, then
\bea\label{pseu2}
C(z)= \left(\begin{array}{cc} 
             -1 & 2\\
             -2 & 3
            \end{array}\right)
            + \left(\begin{array}{cc} 
             1 & -1\\
             1 & -1
            \end{array}\right)z^{-1} 
            + \left(\begin{array}{cc} 
             1 & -1\\
             1 & -1
            \end{array}\right)z^{-2},
\eea
and
\bea
D(z)= \left(\begin{array}{cc} 
             3 & 2\\
             -2 & -1
            \end{array}\right)
            + \left(\begin{array}{cc} 
             -1 & -1\\
             1 & 1
            \end{array}\right)z^{1} 
            + \left(\begin{array}{cc} 
             -1 & -1\\
             1 & 1
            \end{array}\right)z^{2}.
\eea
\par\medskip
\begin{remark} Since 
\bea 
\left(\begin{array}{cc} a_i & -a_i \\
                             a_i  & -a_i \end{array}\right)^2 =0,
\eea 
these pseudoidentity matrix pairs provide counterexamples to the following conjecture (Conjecture 1) in \cite{RTW}:
\par\medskip
{\it Conjecture}: For $m=2$ and $C(z)$ and $D(z)$ as in (\ref{pimp}), if $C(z)\tilde{D}(z) = I_2$ and $p$ is the smallest positive integer such that $C_{k_c-p}\ne 0$, then both $C_{k_c-p}$ and $D_{k_d+p}$ must be invertible matrices.
\end{remark}
\par\medskip
\begin{remark}
A weaker condition than the condition stated in the conjecture that guarantees the factorization of a pseudoidentity matrix pair was also given in \cite{RTW} (see the remark after the proof of Thm. 4.4 in \cite{RTW}). We note that our examples do not satisfy this weaker condition either, since for the matrices in (\ref{pseu2}) the condition
\be
-C_{k_c-p}N + C_{k_c} = 0
\ee
becomes
\be
-\left(\begin{array}{cc} 
             1 & -1\\
             1 & -1
            \end{array}\right)N
            + \left(\begin{array}{cc} 
             1 & -1\\
             1 & -1
            \end{array}\right) = \left(\begin{array}{cc} 
             1 & -1\\
             1 & -1
            \end{array}\right)(I_2-N) =0. 
\ee
But this last equality cannot be true, since if $N$ is such that $N^2=0$, then $I_2-N$ is invertible with $(I_2-N)^{-1}=I_2+N$. 
\end{remark}
\begin{remark}
We note that the matrix in (\ref{pseu2}) can be factored as
\bea\begin{array}{ll}\label{rem3}
C(z) &= \left(\left(\begin{array}{cc} 
             1 & 0\\
             0 & 1
            \end{array}\right)
            - \left(\begin{array}{cc} 
             1 & -1\\
             1 & -1
            \end{array}\right)
            + \left(\begin{array}{cc} 
             1 & -1\\
             1 & -1
            \end{array}\right)z^{-1}\right)\\ 
            {}&{}\\
         {}&   \cdot\left(\left(\begin{array}{cc} 
             1 & 0\\
             0 & 1
            \end{array}\right)
            - \left(\begin{array}{cc} 
             1 & -1\\
             1 & -1
            \end{array}\right) 
            + \left(\begin{array}{cc} 
             1 & -1\\
             1 & -1
            \end{array}\right)z^{-2}\right).
    \end{array}        
\eea
Note also that the degrees do not add up.
\end{remark}
\par\medskip
\section{Complete factorization}
\par
We now prove the following theorem.
\begin{theorem}
Let $(C(z),D(z))$ be a pair of matrices of rank $m\ge 2$ defined by 
\bea
\displaystyle C(z)=\sum_{k=0}^{k_c}C_kz^{-k}, \quad & \displaystyle D(z)=\sum_{k=k_d}^{0}D_kz^{-k}, \quad k_c,k_d>0.
\eea
Then $(C(z),D(z))$ is a pseudoidentity matrix pair if and only if there exist a positive integer $r$ and nilpotent matrices
\be
N_i\in gl(m,\mathbb{F}),\quad N_i^2=0,\quad 1\le i\le r,
\ee
such that
\be
 C(z)=L_{N_r}(z)\cdots L_{N_2}(z)L_{N_1}(z),\quad D(z)=R_{N_r}(z)\cdots R_{N_2}(z)R_{N_1}(z),
\ee
with
\be
L_{N_i}(z) = I_m-N_i+N_iz^{-k_i},\quad R_{N_i}(z) = I_m+N_i^{\ast}-N_i^{\ast}z^{k_i}, \quad 1\le i\le r.
\ee
\end{theorem}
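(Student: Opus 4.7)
My plan is to prove the two implications separately. The if direction is a routine computation, while the only if direction---the substantive part---I would establish by induction on the genus of $(C,D)$, peeling off one factor of the prescribed form from the left at each step.

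For the if direction, the central identity is
\be
L_{N_i}(z)\tilde{R}_{N_i}(z)=(I_m-N_i+N_iz^{-k_i})(I_m+N_i-N_iz^{-k_i})=I_m,
\ee
which follows by direct expansion since every cross term carries a factor of $N_i^2=0$. Combined with $L_{N_i}(1)=R_{N_i}(1)=I_m$, this gives the quadratic condition $C(z)\tilde{D}(z)=I_m$ by telescoping the product $L_{N_r}\cdots L_{N_1}\tilde{R}_{N_1}\cdots\tilde{R}_{N_r}$ from the inside out, and the linear condition $C(1)=D(1)=I_m$ by factor-by-factor evaluation.

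For the only if direction I would induct on a suitable complexity measure of $(C,D)$, for instance the total number of nonzero matrix coefficients in the Laurent expansions of $C$ and $D$. In the inductive step, assuming $k_c\ge 1$, I want to produce a nilpotent $N\in gl(m,\mathbb{F})$ with $N^2=0$ and a positive integer $k$ such that
\be
C(z)=L_N(z)C'(z),\qquad D(z)=R_N(z)D'(z),
\ee
where $(C'(z),D'(z))$ is a pseudoidentity matrix pair of strictly smaller complexity. That $(C',D')$ again satisfies the pseudoidentity conditions is automatic: the quadratic condition collapses to $C'\tilde{D}'=L_N^{-1}C\tilde{D}\tilde{R}_N^{-1}=L_N^{-1}\tilde{R}_N^{-1}=I_m$, using the identity from the if direction (which also yields $\tilde{R}_NL_N=I_m$), and the linear condition is inherited from $L_N(1)=R_N(1)=I_m$. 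Iterating until $C'=D'=I_m$ produces the required sequence $(N_i,k_i)$.

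The main obstacle is the construction of $(N,k)$ at each inductive step. The naive guess $N=C_{k_c}$, $k=k_c$ fails in general: one can choose nilpotents $N_1,N_2$ with $N_i^2=0$ but $N_2N_1$ not square-zero, in which case the leading coefficient of $(I_m-N_2+N_2z^{-k_2})(I_m-N_1+N_1z^{-k_1})$ is $N_2N_1$ and cannot itself play the role of $N$. Instead, the correct $N$ has to be extracted from the joint structure of the tail coefficients $C_{k_c},C_{k_c-1},\ldots$, using the relations $C(z)\tilde{D}(z)=I_m=\tilde{D}(z)C(z)$, $\det C(z)=1$, and $C(1)=I_m$ to force the required range-kernel alignment. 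I expect the extraction to amount to a linear-algebraic Euclidean algorithm, as described for rank $2$ in Section~4; the counterexamples of Section~2 show exactly which obstructions this procedure must accommodate. Establishing existence of such $(N,k)$ in general rank, and verifying that the reduced pair is strictly simpler in the chosen complexity measure, is the heart of the proof.
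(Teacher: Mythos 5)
Your ``if'' direction is fine, and your diagnosis of where the difficulty lies is accurate, but the proposal has a genuine gap at exactly the point you flag: you never construct the pair $(N,k)$ that your induction needs, nor do you show that the reduced pair $(C',D')$ is strictly simpler in your chosen measure. This is not a routine detail to be filled in later --- it is precisely the step at which the original Resnikoff--Tian--Wells argument broke down. Their Conjecture~1 (invertibility of the first nonzero tail coefficient) was designed to supply such a left-peelable factor, and the counterexamples of Section~2 show it is false; moreover the weaker sufficient condition they proposed also fails for those examples. There is a second, independent worry about termination: $L_N(z)^{-1}=I_m+N-Nz^{-k}$, so multiplying $C(z)$ by it can create nonzero coefficients in degrees where $C(z)$ had none (the factorization in (\ref{rem3}) illustrates that degrees and supports do not behave additively), so the ``total number of nonzero matrix coefficients'' need not decrease even when a valid factor exists. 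Without both an existence lemma and a monotonicity argument, the induction does not close.

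The paper's proof avoids single-factor peeling entirely. After substituting $t=z^{-1}$, it performs a Smith-normal-form style reduction of $C(t)$ over the Euclidean domain $\mathbb{F}[t]$ using only type~I elementary row and column operations; since $\det C(t)=1$, the resulting diagonal matrix is constant, giving $C(t)=E_1(t)\cdots E_{u+s}(t)C'$ with each $E_k(t)=I_m+f(t)E_{ij}$ elementary and $C'$ constant. Each such elementary factor is then split monomial by monomial as a product of terms $I_m-a_kE_{ij}+a_kE_{ij}t^k$ (each $a_kE_{ij}$ is square-zero nilpotent) times a constant $I_m+f(1)E_{ij}$; the constant leftovers are conjugated past the nonconstant factors (conjugation preserves the form $I_m-N+Nt^k$ with $N^2=0$) and collected at the right end, where evaluation at $t=1$ forces the accumulated constant to be $I_m$. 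This is a global factorization, not an induction, and it is what lets the argument work in arbitrary rank. If you want to salvage your inductive scheme, you would essentially have to reprove this Euclidean reduction in disguise; as written, the heart of your proof is an unproven assertion.
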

\begin{proof} It is clear that the conditions are sufficient. To prove the necessary part, note that since $D(z)$ is just the adjoint of the inverse of $C(z)$, we need only to prove the statement for $C(z)$. To avoid writing the negative exponents, we make a change of variable (not essential) by letting $t=z^{-1}$. We abuse notation and write
\bea
C(t)=\sum_{k=0}^{k_c}C_kt^k.
\eea
\par
Since the entries of $C(t)$ are elements of the Euclidean domain $\mathbb{F}[t]$, by the Division algorithm, we can use elementary row and column operations of the form (type I elementary operations): 
\begin{center} \textit{add a multiple of a row or column to another row or column}, \end{center}
to reduce $C(t)$ to a diagonal matrix
\bea
C^{\prime}(t)=\left(\begin{array}{cccc} 
d_1 & {} & {} & {}\\
{} & d_{2} & {} & {}\\
{} & {} & \ddots & {}\\
{} & {} & {} & d_{m} \end{array}\right),
\eea
where 
\bea
0\ne d_i\in \mathbb{F}[t], \quad 1\le i\le m.
\eea
This is possible since: (a) we can use elementary row and column operations to reduce $C(t)$ to the given form (see for example [1, pp. 459-460]--here, the statement is actually weaker, since we do not require that $d_i/d_{i-1}$); (b) the operations that interchange two rows or columns can be obtained from type I elementary operations together with scalar multiplications by nonzero numbers to rows and columns; and, (c) in our diagonal form, we allow the elements $d_i, 1\le i\le m$, to be defined up to nonzero scalar multiples (e.g. we do not care about negative signs), and thus multiplication by nonzero numbers to rows and columns are not needed here.
\par 
Writing our reduction process as matrix multiplications, we have
\bea\label{ct1}
C^{\prime}:=C^{\prime}(t)=P_u(t)\cdots P_1(t)C(t)P^{\prime}_s(t)\cdots P^{\prime}_1(t),
\eea
where the $P_k(t), 1\le k\le u$, and $P^{\prime}_l(t),1\le l\le s$, are elementary matrices of the form:
\bea\label{e1}
I_m+f(t)E_{ij},\quad i\ne j,\quad f(t)\in\mathbb{F}[t],
\eea
where $E_{ij}$ is the $m\times m$ matrix with 1 at the $ij$th entry and 0 elsewhere. 
\par
Since $\det(C(t))=1$ (see (\ref{det})), by taking determinants on both sides of (\ref{ct1}), we conclude that all $d_i\in\mathbb{F}, 1\le i\le m$, that is, $C^{\prime}$ is a constant diagonal matrix.
\par
Let 
\bea
E_k(t)= \left\{\begin{array}{lcl} (P_k(t))^{-1}, & \mbox{if} &1\le k\le u,\\
C^{\prime}(P^{\prime}_{k-u}(t))^{-1}(C^{\prime})^{-1},&\mbox{if} & u+1\le k\le u+s.
\end{array}\right.
\eea
Then all $E_k(t)$ are elementary matrices of the form (\ref{e1}), and 
\bea\label{ct2}
C(t)=E_1(t)\cdots E_{u+s}(t)C^{\prime}.
\eea
\par
Consider elementary matrices of the form (\ref{e1}). Let 
\bea
E(t)=I_m+f(t)E_{ij}, \quad i\ne j, 
\eea
with
\bea
f(t)=a_nt^n+a_{n-1}t^{n-1}+\cdots+a_0 \in\mathbb{F}[t].
\eea
We can assume that $f(t)\ne 0$, since $f(t)=0$ gives the identity matrix, and factor $E(t)$ as
\bea\label{et1}
\qquad E(t)= (I_m+(a_nt^n-a_n)E_{ij})\cdots (I_m+(a_1t-a_1)E_{ij})(I_m+b_0E_{ij}),
\eea
where a term exists only if the corresponding coefficient $a_k$ is not 0, and 
\bea
b_0=\sum_{i=0}^{n}a_i.
\eea
\par
Note that the factors in (\ref{et1}) involving $t$ can be written as
\bea\label{n1}
I_m -a_kE_{ij} +a_kE_{ij}t^k = I_m -N+Nt^k,
\eea
where $N:=a_kE_{ij}$ is nilpotent and $N^2=0$. If we conjugate the right hand side in (\ref{n1}) by $G\in GL(m,\mathbb{F})$, we have
\bea\label{n2}
G(I_m -N+Nt^k)G^{-1} = I_m -N^{\prime} + N^{\prime}t^k,
\eea
with $(N^{\prime})^2=(GNG^{-1})^2=0$. Therefore, upon plugging (\ref{et1}) and (\ref{n1}) into (\ref{ct2}) and moving all the constant matrices (e.g. the last term in (\ref{et1})) to the end, we obtain
\bea\label{ct3}
C(t)= L_{N_r}(t)\cdots L_{N_2}(t)L_{N_1}(t)QC^{\prime}\quad\mbox{for some $r>0$},
\eea
where the factors $L_{N_i}(t)$ are as defined in (\ref{n1}) or (\ref{n2}), and $Q\in GL(m,\mathbb{F})$. Setting $t=1$ in (\ref{ct3}) and noticing that $L_{N_i}(1)=I_m, 1\le i\le r$, we have $QC^{\prime}=I_m$. Substituting back $z^{-1}=t$, we have proved the theorem.
\end{proof}
\par
\begin{remark} Since the factors are formed by nilpotent matrices, we should not expect that the degrees of the factors add up to the degree of $C(z)$ as (\ref{rem3}) shows.
\end{remark}
\begin{remark}
In general, the factorization depends on the reduction process and it is not unique.
\end{remark}
\par\medskip
\begin{example} Here we consider a $3\times 3$ example. Let
\be
C(z) &=& \left(\begin{array}{ccc}
-z^{-2}-z^{-1}+3 & -z^{-2}-z^{-1}+2 & z^{-2}+z^{-1}-2\\
2z^{-2}+2z^{-1}-4 & 2z^{-2}+2z^{-1}-3 & -2z^{-2}-2z^{-1}+4\\
z^{-2}+z^{-1}-2 & z^{-2}+z^{-1}-2 & -z^{-2}-z^{-1}+3\\
\end{array}\right),\\
{} &{}&{}\\
E_1 &=& \left(\begin{array}{ccc}
1 &0 & 1\\
0 &1 &-2\\
0 &0 &1
\end{array}\right)
= \left(\begin{array}{ccc}
1 &0 & 1\\
0 &1 &0\\
0 &0 &1
\end{array}\right)\left(\begin{array}{ccc}
1 &0 & 0\\
0 &1 &-2\\
0 &0 &1
\end{array}\right),\\
{} &{}&{}\\
E_2 &=& \left(\begin{array}{ccc}
1 &0 & 0\\
0 &1 &0\\
-z^{-2}-z^{-1}+2 &0 &1
\end{array}\right),\\ 
{} &{}&{}\\
E_3 &=& \left(\begin{array}{ccc}
1 &0 & 0\\
0 &1 &0\\
0 &-z^{-2}-z^{-1}+2 &1
\end{array}\right),\\
{} &{}&{}\\
E_4 &=& \left(\begin{array}{ccc}
1 &0 &-1\\
0 &1 &2\\
0 &0 &1
\end{array}\right)
= \left(\begin{array}{ccc}
1 &0 &-1\\
0 &1 &0\\
0 &0 &1
\end{array}\right)\left(\begin{array}{ccc}
1 &0 & 0\\
0 &1 &2\\
0 &0 &1
\end{array}\right).
\ee
Then
\be
E_3E_2E_1C(z)E_4=I_4.
\ee
Thus after moving the $E_i$'s to the right hand side and simplifying, we obtain
\be
C(z)=(I_4-N_1+N_1z^{-1})(I_4-N_1+N_1z^{-2})(I_4-N_2+N_2z^{-1})(I_4-N_2+N_2z^{-2}),
\ee 
where
\be
N_1=\left(\begin{array}{ccc}
-1 &0 &-1\\
2 &0 &2\\
1 &0 &1
\end{array}\right),\quad N_2 =\left(\begin{array}{ccc}
0 &-1 &2\\
0 &2 &-4\\
0 &1 &-2
\end{array}\right).
\ee
\end{example}
As a consequence of \textbf{Thm. 3.1}, we have a refinement of \textbf{Thm. 2.1}:
\begin{theorem} (biorthogonal wavelet factorization theorem)
For $m\ge 2$, a pair of $m\times mg$ matrices $(L,R)$ is a biorthogonal wavelet pair of rank $m$ if and only if there exist primitive paraunitary matrices $V_1,\ldots, V_d$, $d\ge 0$, and nilpotent matrices
\be
N_i\in gl(m,\mathbb{F}),\quad N_i^2=0,\quad 1\le i\le r,
\ee
such that 
\bea\begin{array}{c}
L(z)=z^{-k_0}V_1(z)\cdots V_d(z)L_{N_r}(z)\cdots L_{N_1}(z)\left(\begin{array}{cc} 1 & 0\\ 0 & G
                                    \end{array}\right)\mathbf{H},\\
                                    {}\\
R(z)=z^{-k_0}V_1(z)\cdots V_d(z)R_{N_r}(z)\cdots R_{N_1}(z)\left(\begin{array}{cc} 1 & 0\\ 0 & (G^{-1})^{\ast}
                                    \end{array}\right)\mathbf{H},                                    

\end{array}
\eea
where $k_0\in\mathbb{Z}$, $d=b-mk_0$, $b$ is the exponent of $\det L(z)$, $G\in GL(m-1,\mathbb{F})$, $\mathbf{H}$ is the canonical Haar matrix of rank $m$, and $L_{N_i}, R_{N_i}, 1\le i\le r$, are as defined in \textbf{Thm. 3.1} such that the genus of 
\be
 C(z)=L_{N_r}(z)\cdots L_{N_1}(z)
\ee
is $\le g$.
\end{theorem}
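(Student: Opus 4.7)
The plan is to derive this theorem as an almost immediate corollary by stacking Theorem 3.1 on top of Theorem 2.1. The two results are tailored to combine: Theorem 2.1 peels off the paraunitary piece, the block-diagonal conjugation by $G$, and the Haar matrix, reducing the problem to a pseudoidentity matrix pair $(C(z),D(z))$; Theorem 3.1 then expresses that pseudoidentity pair as a product of the elementary nilpotent factors $L_{N_i}(z)$ and $R_{N_i}(z)$. So the whole argument should be a substitution followed by some consistency bookkeeping.

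For the forward (\emph{only if}) direction, I would start from a biorthogonal wavelet pair $(L,R)$ of rank $m\ge 2$ and invoke Theorem 2.1 to extract primitive paraunitary matrices $V_1,\ldots,V_d$, an element $G\in GL(m-1,\mathbb{F})$, the integer $k_0$ with $d=b-mk_0$, and a pseudoidentity matrix pair $(C(z),D(z))$ with the genus of $C(z)$ at most $g$. Then I would apply Theorem 3.1 to this specific pair to get nilpotent matrices $N_1,\ldots,N_r$ satisfying $N_i^2=0$, together with integers $k_i$, such that
\begin{equation*}
C(z)=L_{N_r}(z)\cdots L_{N_1}(z),\qquad D(z)=R_{N_r}(z)\cdots R_{N_1}(z).
\end{equation*}
Plugging these factorizations into the formulas of Theorem 2.1 yields precisely the asserted expressions for $L(z)$ and $R(z)$. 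The genus condition on $C(z)$ is inherited directly from Theorem 2.1, since I am not replacing $C(z)$, only rewriting it.

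For the reverse (\emph{if}) direction, given any $V_i$, $N_j$, $G$, and $k_0$ of the stated form, I would set $C(z):=L_{N_r}(z)\cdots L_{N_1}(z)$ and $D(z):=R_{N_r}(z)\cdots R_{N_1}(z)$ and check that $(C(z),D(z))$ is a pseudoidentity matrix pair. This is the sufficiency half of Theorem 3.1 (noted there as clear): each factor satisfies $L_{N_i}(z)\tilde{R}_{N_i}(z)=I_m$ because $N_i^2=0$ and hence $(N_i^*)^2=0$, each factor satisfies $L_{N_i}(1)=R_{N_i}(1)=I_m$, and these properties multiply through the product (one cancels neighbors using $\widetilde{AB}=\tilde{B}\tilde{A}$). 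With $(C(z),D(z))$ established as a pseudoidentity pair whose genus is bounded by $g$, the \emph{if} direction of Theorem 2.1 then produces the biorthogonal wavelet pair $(L,R)$.

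The only real obstacle is organizational rather than mathematical: I need to track that the integer $k_0$, the paraunitary count $d=b-mk_0$, and the genus bound are preserved coherently between the two theorems, and that the ordering convention for the $L_{N_i}$ and $R_{N_i}$ matches on both sides. There is no new analytic content beyond concatenating the two factorizations already proved.
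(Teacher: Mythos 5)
Your proposal is correct and coincides with the paper's own treatment: the paper presents this theorem as an immediate consequence of combining Theorem 2.1 (reduction to a pseudoidentity pair) with Theorem 3.1 (factorization of that pair into the nilpotent factors $L_{N_i}$, $R_{N_i}$), offering no further argument beyond the substitution you describe.
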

\par\medskip
\section{The rank 2 case}
For a rank 2 pseudoidentity matrix pair $(C(z),D(z))$, the factorization can be described in a concrete way. We will see that the computational complexity in the factorization is basically determined by the operations involved in carrying out the Euclidean algorithm to determine the GCD of {\it two polynomials} with coefficients in $\mathbb{F}$. Again, to avoid writing the negative exponents, we use the variable $t$ instead of $z$ in our discussion as before.
Let $a=a(t),b=b(t),c=c(t),d=d(t)\in \mathbb{F}[t]$ be such that
\bea\label{ct41}
C(t)=\left(\begin{array}{cc}
a & b\\
c & d
\end{array}\right).
\eea
\par
Since $C(1)=I_2$, $ad\ne 0$, and $b(1)=c(1)=0$. If one of $b$ and $c$ is 0, then $\det(C(t))=1$ implies that $a$ and $d$ are constants and hence $a=d=1$. If, say $c=0$, then we can write 
\be
b(t)=(b_nt^n-b_n)+\ldots+(b_1t-b_1),
\ee
and factor $C(t)$ as 
\be
C(t)= L_{N_n}(t)\cdots L_{N_2}(t)L_{N_1}(t),
\ee
where
\be
L_{N_i}(t)=I_2-\left(\begin{array}{cc}
0 & b_i\\
0 & 0
\end{array}\right)+ \left(\begin{array}{cc}
0 & b_i\\
0 & 0
\end{array}\right)t^{i}, \quad 1\le i\le n.
\ee
\par
So assume $bc\ne 0$. To fix our case of discussion, let $\deg(a)\le\deg(c)$ and apply the division algorithm repeatedly to get
\bea\label{eu}\begin{array}{rcl}
c &=& q_1a + r_1,\quad \deg(r_1)<\deg(a),\\
a &=& q_2r_1 + r_2,\quad \deg(r_2)<\deg(r_1),\\
{} &\vdots &{}\\
r_{n-2} &=& q_nr_{n-1} + r_{n},\quad \deg(r_n)<\deg(r_{n-1}),\\
r_{n-1} &=& q_{n+1}r_n.
\end{array}
\eea 
Thus, by multiplying 
\bea\label{41}
(I_2 - q_{n+1}E_{uv})\cdots (I_2 - q_2E_{12})(I_2 - q_1E_{21}), 
\eea
to $C(t)$ from the left (where the index $uv=12$ or $21$ depends on the case), we obtain
\bea
\left(\begin{array}{cc}
r_n & b_1\\
0 & d_1
\end{array}\right)\quad\mbox{or}\quad \left(\begin{array}{cc}
0 & b_1\\
r_n & d_1
\end{array}\right).
\eea
Since the determinant of (\ref{41}) is 1, we must have $r_n,d_1$ (or $b_1$) $\in\mathbb{F}$. Multiplying 
\bea\label{42}
I_2-(d_1)^{-1}b_1E_{12}\quad\mbox{or} \quad I_2-(b_1)^{-1}d_1E_{21}
\eea
from the left again, we get a constant matrix:
\bea\label{43}
\left(\begin{array}{cc}
r_n & 0\\
0 & d_1
\end{array}\right)\quad\mbox{or}\quad \left(\begin{array}{cc}
0 & b_1\\
r_n & 0
\end{array}\right).
\eea
Thus, letting the matrix that was actually used in the process from (\ref{42}) be 
\be 
I_2-g(t)E_{rs},
\ee 
and the corresponding matrix from (\ref{43}) be $C^{\prime}$ 
we get
\bea\label{ct42}
\qquad C(t)=(I_2 + q_1E_{21})(I_2 + q_2E_{12})\cdots (I_2 + q_{n+1}E_{uv})(I_2+g(t)E_{rs})C^{\prime}.
\eea
Factoring each of the factors from the right hand side further as before if needed (see the proof of \textbf{Thm. 3.1}), we get the factorization desired. 
\par
To summarize our discussion, we introduce some notation. For an integer $m\ge 2$, a $2\times 2$ matrix $E_{ij}, i\ne j$, and a polynomial
\bea
f(t)=a_nt^n+\cdots+a_1t+a_0\in\mathbb{F}[t],
\eea
we define the factorization of $I_m+f(t)E_{ij}$ as follows. Let
\bea
\qquad L_{ij,a_k}=I_m-a_kE_{ij}+t^ka_kE_{ij},\quad 1\le k\le n,\quad E_{ij,f(1)}=I_m+f(1)E_{ij},
\eea
and set
\bea
L_{ij,f}(t)=L_{ij,a_n}\cdots L_{ij,a_1}.
\eea 
Then
\bea
 I_m+f(t)E_{ij}=L_{ij,f}(t)E_{ij,f(1)}.
\eea
\par
\begin{theorem}
Let $C(t)$ be defined as in (\ref{ct41}) and assume (\ref{eu}) and (\ref{ct42}) hold. Then $C(z)$ can be factored completely into a product of matrices of the form:
\be
I_2-N+z^{-k}N, \quad N^2=0,
\ee
by substituting $z^{-1}$ for $t$ in  
\be
L_{12,q_1}L^{\prime}_{21,q_2}\cdots L^{\prime}_{uv,q_{n+1}}L^{\prime}_{rs,g},
\ee
where $q_k, 1\le k\le n+1$, are as in (\ref{eu}); $g=g(t), uv, rs$, are as in (\ref{ct42}); and  
\be
L^{\prime}_{21,q_2} &=& E_{12,q_1(1)}L_{21,q_2}E_{12,-q_1(1)},\\
L^{\prime}_{12,q_3} &=& E_{12,q_1(1)}E_{21,q_2(1)}L_{21,q_2}E_{21,-q_2(1)}E_{12,-q_1(1)},\\
{} &\vdots& {}
\ee
are the conjugates of the corresponding $L_{ij,f}$.
\end{theorem}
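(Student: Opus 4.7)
The plan is to start from the factorization in equation (\ref{ct42}), apply the atomic decomposition $I_2+f(t)E_{ij}=L_{ij,f}(t)\,E_{ij,f(1)}$ to each variable factor, and then systematically push all the resulting constant matrices to the right by conjugation. Concretely, each of the $n+1$ row-operation factors $I_2+q_kE_{uv}$ and the final factor $I_2+g(t)E_{rs}$ splits into a product of nilpotent-type matrices (already of the required form $I_2-N+t^kN$ with $N^2=0$) followed by a constant elementary matrix $E_{uv,q_k(1)}$ or $E_{rs,g(1)}$. This turns (\ref{ct42}) into an alternating product of $L$-blocks and constant $E$-matrices, capped on the right by the original constant matrix $C'$.

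The main step is the rightward sweep of all constant factors. When a constant matrix $E$ passes over an $L_{kl,h}(t)$ to its right, the product $E\,L_{kl,h}(t)$ is rewritten as $(EL_{kl,h}(t)E^{-1})E$, so $L_{kl,h}(t)$ gets conjugated by $E$. By the observation used in the proof of Theorem 3.1 (see equation (\ref{n2})), a conjugate of $I_2-N+t^kN$ by any invertible constant matrix is again of the same nilpotent form, so the conjugated block is still a product of matrices of the required shape. Iterating this over all constants produces exactly the $L'$-blocks described in the statement: the $k$th variable factor is conjugated by the product of all constant matrices stemming from the earlier factors, matching the nested formulas $L'_{21,q_2}=E_{12,q_1(1)}L_{21,q_2}E_{12,-q_1(1)}$, and so on.

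After the sweep, the expression for $C(t)$ has the form of the claimed product of $L$ and $L'$ blocks, times one accumulated constant matrix $Q\in GL(2,\mathbb{F})$ on the right that absorbs all of the constant factors together with $C'$. Setting $t=1$ forces $Q=I_2$: by construction $L_{ij,f}(1)=I_2$ and hence each conjugate $L'(1)=I_2$ as well, while the pseudoidentity hypothesis gives $C(1)=I_2$. Substituting $z^{-1}$ for $t$ then yields the advertised factorization. The main obstacle is essentially bookkeeping, namely verifying that the conjugating constant accumulated for the $k$th block is precisely the product $E_{ij,q_1(1)}\cdots E_{uv,q_{k-1}(1)}$ claimed; this is a direct consequence of the ordered derivation of (\ref{ct42}) and the commutation rule $EL=(ELE^{-1})E$ applied left to right.
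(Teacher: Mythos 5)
Your proposal is correct and follows essentially the same route as the paper: the paper's proof of this theorem simply refers to the last paragraph of the proof of Theorem 3.1, which is exactly your argument of splitting each elementary factor via $I_2+f(t)E_{ij}=L_{ij,f}(t)E_{ij,f(1)}$, sweeping the constants rightward by conjugation (which preserves the form $I_2-N+Nt^k$ with $N^2=0$), and setting $t=1$ to kill the accumulated constant.
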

\begin{proof} See the last paragraph of the proof of \textbf{Thm. 3.1}.
\end{proof}
\section{Conclusion and Discussion}
\par
The work of Resnikoff, Tian, and Wells \cite{RTW} shows that any biorthogonal wavelet matrix pair can be decomposed into four components: an orthogonal component, a pseudoidentity matrix pair, an invertible matrix, and a constant matrix. Their work reduced the parametrization and factorization of a biorthogonal wavelet matrix to that of a pseudoidentity matrix pair. The main contribution of the current paper is the factorization theorem for a pseudoidentity matrix pair of arbitrary rank proved in section 3. Our result implies in particularly in the case $m=2$, that a slightly modified version of the factorization theorem stated in \cite{RTW} is true in spite of the invalidness of the reduction procedure proposed there. We also provided several examples to explain our results. As the examples show, since the factorizations involve nilpotent matrices, in contrast to the orthogonal case \cite{Po}, there is no uniqueness result even for the rank 2 case. Furthermore, in a factorization of a pseudoidentity pair, the degrees of the factors do not add up to the genus of the pair in general for the same reason. Hence the parametrization of these wavelets is not immediately clear from the factorization theorem. 
\par
To explain this point in more detail, recall that \cite{RW, SN} in the orthogonal case, the factorization of paraunitary matrices into products of primitive paraunitary matrices provides an \textit{onto} map from 
\be
\underbrace{S^{2m-1}\times S^{2m-1}\times\cdots\times S^{2m-1}}_{\mbox{{\footnotesize $g$ factors}}}\times U(m) 
\ee
to the set of all paraunitary matrices of genus $g$,
where $U(m)$ is the set of unitary matrices (constant) and 
\be
S^{2m-1}=\{v\in\mathbb{F}^m | v^{\ast}v=1\}.
\ee
But in the biorthogonal case, the factorization does not lead to a similar result. 
More precisely, if $(C(z),D(z))$ is a pseudoidentity matrix pair of genus $g$, and
\be\label{c51}
 C(z)=L_{N_r}(z)\cdots L_{N_2}(z)L_{N_1}(z),\quad D(z)=R_{N_r}(z)\cdots R_{N_2}(z)R_{N_1}(z),
\ee
where the $N_i, 1\le i\le r$, are nilpotent matrices such that $N_i^2=0$, and
\be
L_{N_i}(z) = I_m-N_i+N_iz^{-k_i},\quad R_{N_i}(z) = I_m+N_i^{\ast}-N_i^{\ast}z^{k_i}, \quad 1\le i\le r,
\ee
then we only have 
\be\label{c52}
k_1+k_2+\cdots+k_r\ge g,
\ee 
and for some factorizations, strict inequality can hold.
Thus, the parametrization problem in the biorthogonal case is more complicated and needs further study, and the related geometry problems could offer solutions. These topics are currently under investigation. 
\par
Finally, we would like to point out that though the factorization of a pseudoidentity matrix pair provided in \textbf{Thm. 3.1} is useful in analyzing the structure of the biorthogonal wavelets such as parametrization, in applications, one may want to use the factorization provided by (\ref{ct2}) and (\ref{et1}) directly. We state this alternative factorization as a theorem below. Note that because of the mix of the constant and nonconstant matrices in the factorization, the statement is not as neat theoretically. Let $\delta_{ij}$ be $1$ or $0$ according to whether $i=j$ or not.
\begin{theorem} (lattice structure) The matrix $C(z)$ of size $m\times gm$ is a pseudo identity matrix if and only if 
\bea
C(z)=E_r(z)\cdots E_{1}(z)C^{\prime},
\eea
where the $E_s(z), 1\le s\le r$, are elementary matrices of the form
\bea
\qquad I_m +(\delta_{k_s,0}-1)a_sE_{ij} +a_sE_{ij}z^{-k_s},\quad i\ne j, \quad a_s\ne 0, \quad 0\le k_s\le g,
\eea
and $C^{\prime}$ is a constant diagonal matrix such that $\det (C^{\prime})=1$ and 
\bea
\left(\prod_{\substack{1\le s\le r\\ k_s=0}}{E_s(z)}\right)C^{\prime}=I_m.
\eea
\end{theorem}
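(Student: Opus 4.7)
The plan is to obtain Theorem 5.1 as an almost immediate consequence of the constructions already carried out in the proof of Theorem 3.1; all that is needed is a minor repackaging in which the constant elementary matrices produced by (\ref{et1}) are not absorbed into a single conjugation (as they are at the end of the proof of Theorem 3.1), but are instead retained as separate factors in the product alongside the nonconstant ones.

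For the necessary direction, I would follow the proof of Theorem 3.1 as far as (\ref{ct2}), which writes $C(t)=E_1(t)\cdots E_{u+s}(t)C^{\prime}$ with $C^{\prime}$ a constant diagonal matrix of determinant $1$ and each $E_k(t)$ an elementary matrix of the shape (\ref{e1}), that is, $I_m+f_k(t)E_{i_kj_k}$ with $i_k\ne j_k$. I would then apply (\ref{et1}) to each $E_k(t)$, splitting it into a product whose factors are either of the form $I_m-aE_{ij}+aE_{ij}t^{n}$ for some $n\ge 1$, or of the single constant shape $I_m+bE_{ij}$ (the rightmost factor in (\ref{et1})). Both shapes fit the uniform template $I_m+(\delta_{k_s,0}-1)a_sE_{ij}+a_sE_{ij}t^{k_s}$ of the statement. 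Reindexing the concatenated product and substituting back $t=z^{-1}$ yields the claimed factorization. The constraint on the subproduct of $k_s=0$ factors is then obtained by evaluating at $z=1$: every factor with $k_s>0$ collapses to $I_m$, so $C(1)=\bigl(\prod_{k_s=0}E_s(z)\bigr)C^{\prime}$, and $C(1)=I_m$ forces the stated identity.

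For the sufficient direction, I would verify the three defining properties of a pseudoidentity matrix pair directly from the product form. The support condition (only non-negative powers of $z^{-1}$ appear in $C(z)$) is immediate because each factor has that property and products preserve it. The condition $C(1)=I_m$ follows from the stated constraint on the $k_s=0$ subproduct by the same evaluation at $z=1$. To exhibit the matching $D(z)$ with support in $\{k_d,\dots,0\}$, I would use that each factor has the form $I_m+c_s(z)E_{ij}$ with $i\ne j$, so $E_{ij}^{2}=0$ and $E_s(z)^{-1}=I_m-c_s(z)E_{ij}$ is still a polynomial in $z^{-1}$. Consequently $C(z)^{-1}=(C^{\prime})^{-1}E_1(z)^{-1}\cdots E_r(z)^{-1}$ is a polynomial in $z^{-1}$, and setting $\tilde D(z):=C(z)^{-1}$ produces the required $D(z)$, which automatically satisfies $D(1)=I_m$ and completes the pseudoidentity pair. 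The identity $\det C(z)=1$ drops out since each factor has determinant $1$.

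The argument is essentially bookkeeping rather than a genuine obstacle. The two points that require care are: (a) recognizing that the single template with the Kronecker-delta coefficient $(\delta_{k_s,0}-1)$ uniformly encodes both the nilpotent-type factor $I_m-aE_{ij}+aE_{ij}z^{-k}$ (for $k\ge 1$) and the constant factor $I_m+bE_{ij}$ (for $k=0$) coming out of (\ref{et1}); and (b) tracking the order in which the left factors $(P_i)^{-1}$ and the conjugated right factors $C^{\prime}(P_j^{\prime})^{-1}(C^{\prime})^{-1}$ appear in (\ref{ct2}), so that the stated constraint is imposed on the correctly ordered subproduct of the $k_s=0$ factors.
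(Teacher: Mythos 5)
Your proposal is correct and matches the paper's intended argument exactly: the paper gives no separate proof of this theorem, explicitly deferring to the factorization already established in (\ref{ct2}) and (\ref{et1}) within the proof of Theorem 3.1, which is precisely the repackaging you carry out. Your explicit verification of the sufficiency direction (invertibility of each factor via $E_{ij}^2=0$, and the evaluation at $z=1$ yielding the constraint on the $k_s=0$ subproduct) fills in details the paper leaves implicit, and your point about preserving the relative order of the constant factors is the right thing to watch.
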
  
\par 
\medskip

\end{document}